\documentclass[11pt,reqno]{amsart}
\usepackage[T1]{fontenc}
\usepackage[letterpaper,margin=1in]{geometry}
\usepackage{amsmath,amssymb,mathtools}
\usepackage{booktabs}
\usepackage{microtype}
\usepackage[hidelinks]{hyperref}
\hypersetup{pdftitle={A combinatorial proof of the Chan--Robbins--Yuen volume formula},
  pdfauthor={Nurtas Shyntas}}
\newtheorem{theorem}{Theorem}[section]
\newtheorem{proposition}[theorem]{Proposition}
\newtheorem{lemma}[theorem]{Lemma}

\theoremstyle{remark}

\numberwithin{equation}{section}
\newcommand{\A}{\mathcal A}
\newcommand{\F}{\mathcal F}

\newcommand{\indeg}{\operatorname{indeg}}
\newcommand{\sqcupdot}{\sqcup}
\DeclareMathOperator{\vol}{vol}
\title[A combinatorial proof of the CRY volume formula]{A combinatorial proof of the\protect\\ Chan--Robbins--Yuen volume formula}
\author{Nurtas Shyntas}
\address{Independent Researcher}
\date{13 October 2026}
\subjclass[2020]{Primary 05A19; Secondary 05A15, 05C21, 52B20}
\keywords{Catalan numbers, Narayana numbers, integer flows, reduction trees,
Chan--Robbins--Yuen polytope}

\begin{document}
\begin{abstract}
The Chan--Robbins--Yuen polytope $\mathrm{CRY}_{m+1}$ has normalized volume
$\prod_{j=1}^{m-1}C_j$, a product of Catalan numbers.
Zeilberger proved the formula using the Morris constant-term identity;
finding a combinatorial proof remained an open problem. We give such a
proof by counting nonnegative integer flows. We add an extra edge from
each vertex to a common sink and count flows according to how many of
these edges carry positive flow. A double count proves a special case of
a recurrence of Morales and Shi and gives these counts in terms of
Narayana numbers. Deleting the first vertex then yields the Catalan
product. We also give an explicit bijection for the recurrence
and prove the corresponding reduction formula posed by Postnikov.
\end{abstract}
\maketitle

\section{Introduction}

The Chan--Robbins--Yuen volume formula arose from the study of volumes of
faces of the Birkhoff polytope, the polytope of doubly stochastic matrices.
In a 1998 preprint, later published as~\cite{cry}, Chan, Robbins, and Yuen
conjectured that the normalized volume of one such face is a product of
consecutive Catalan numbers. They verified the formula for matrices of
size at most~12 and gave a triangulation that reduced the conjecture to
counting triangular arrays of nonnegative integers.
Zeilberger gave a proof using the Morris constant-term identity, published
in 1999~\cite{zeilberger}. The problem of finding a combinatorial proof
remained open.

Morales and Shi~\cite{morales-shi} approached this problem through integer
flows. They introduced a refinement of the Morris identity and proved
several of its relations combinatorially. In Section~7.2 of their paper,
they asked for a combinatorial proof of the remaining recurrence, their
equation~(19). The search for a combinatorial proof of the volume formula
is also discussed in Dugan's 2025 paper~\cite{f-vectors}.
Here we give a double-counting proof, and an explicit bijective proof, of
the recurrence for the flow sets defined in Section~\ref{sec:two-exits}.
In Morales and Shi's notation, this is the case $a=b=c=1$.
This specialization suffices to give
a combinatorial proof of the Chan--Robbins--Yuen volume formula.

A nonnegative real flow assigns a nonnegative real number to each edge
of a directed graph. Its net flow at a vertex is the outgoing flow minus
the incoming flow. We use the flow-polytope description of
$\mathrm{CRY}_{m+1}$: it is the set of such flows on the complete directed
graph with vertices $0,1,\ldots,m+1$, all edges directed from smaller to
larger vertices, with net flow $(1,0,\ldots,0,-1)$.
Normalized volume is $d!$ times volume measured in coordinates from the
integer-flow lattice, where $d$ is the dimension. The formula we prove
combinatorially is Zeilberger's
theorem~\cite{zeilberger}.
\begin{theorem}[Chan--Robbins--Yuen volume formula]\label{thm:volume}
For every $m\geq1$,
\begin{equation}\label{eq:volume}
 \vol(\mathrm{CRY}_{m+1})=\prod_{j=1}^{m-1}C_j,
 \qquad C_j=\frac{1}{j+1}\binom{2j}{j}.
\end{equation}
\end{theorem}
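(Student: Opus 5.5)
We follow the abstract: reduce the volume to a count of integer flows, introduce a refined count with extra sink edges, prove a Narayana-type recurrence for it by double counting, and specialize.

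\textbf{Step 1: volume as a flow count.} By the Lidskii / Postnikov--Stanley formula for flow polytopes, the normalized volume of the flow polytope of the complete graph $K_{m+2}$ with net flow $(1,0,\ldots,0,-1)$ equals the number of nonnegative integer flows on the graph with vertex $0$ (and the sink) removed, i.e.\ on $K_{m+1}$ on vertices $1,\ldots,m+1$, with net flow
\[
(0,1,2,\ldots,m-1,\,-\tbinom{m}{2})
\]
on vertices $1,\dots,m+1$. The net flow at a vertex is its indegree in $K_{m+2}$ minus $1$, and the sink balances. Equivalently, deleting the first vertex turns the count into flows with these indegree-shifted netflows. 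We take this reduction as the starting point. The goal becomes
\[
\#\F(m)=\prod_{j=1}^{m-1}C_j .
\]

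\textbf{Step 2: the refined family.} Add to the graph on vertices $1,\ldots,n$ an extra edge from each vertex to a common sink $s$. Prescribe net flows of the form $(a_1,\ldots,a_n)$ with $a_i=i-1$ (plus a parameter shift). Let $F_{n,k}$ be the number of nonnegative integer flows in which exactly $k$ of the sink edges carry positive flow. Summing $F_{n,k}$ over $k$ recovers a count of the Step~1 type, since flow to $s$ may be reinterpreted as flow to the last vertex.

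\textbf{Step 3: the recurrence.} Prove the $a=b=c=1$ case of Morales--Shi's relation~(19): a linear relation expressing $F_{n,k}$ in terms of the $F_{n-1,\cdot}$ with Narayana coefficients. The mechanism is double counting pairs consisting of a flow on $n$ vertices and a marked choice among the vertices whose sink edge is positive. Given such a pair, contract the last vertex by routing its incoming flow through the sink edges. The multiplicity with which a flow on $n-1$ vertices arises should be a lattice-path (ballot) count, which is where Narayana numbers $N(n,k)=\frac1n\binom nk\binom n{k-1}$ enter. Solving the recurrence then gives
\[
F_{n,k}=N(\cdot,k)\prod_{j<n}C_j ,
\]
up to the precise indexing, proved by induction using the Narayana identity $\sum_k N(j,k)=C_j$.

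\textbf{Step 4: conclude.} Sum over $k$, or specialize to the boundary value of $k$ corresponding to deleting the first vertex. This yields $\#\F(m)=\prod_{j=1}^{m-1}C_j$, and hence Theorem~\ref{thm:volume} by Step~1.

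\textbf{Main obstacle.} The hard step is Step~3: finding the statistic and the double count under which the fiber sizes are exactly Narayana numbers, independent of the particular flow. I would first verify small cases ($n\le4$) numerically to pin down the indexing. I would then try to make the fiber count a ballot problem by ordering the positive sink edges and using a cycle-lemma argument.
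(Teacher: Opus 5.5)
Your Steps~1 and~2 are sound and agree with the paper. The Lidskii reduction gives $\vol(\mathrm{CRY}_{m+1})=u_m$, the number of flows on $K_m$ with supply $i-1$ at vertex $i$. Your refined family is the paper's $\A_{m,k}$, with supplies exactly $i-1$ and no shift, so that $a_{m,0}=u_m$. Your claim that summing over $k$ gives a Step~1 count is also right if you read the distinguished exit $r_i$ as the edge $i\to m+1$ of $K_{m+1}$, whose own exit value is then forced. This gives $\sum_k a_{m,k}=u_{m+1}$ directly, bypassing the paper's deletion lemma.

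So everything reduces to $\sum_k a_{m,k}=C_m u_m$, and that is exactly what Step~3 does not prove. It describes the outcome you want: when the last vertex is contracted, each smaller flow should have a constant, Narayana-many set of preimages. But it gives neither the map nor the count, and constancy is not automatic. For instance, delete vertex $3$ and fold $f_{h3}$ into $r_h$. Then the six flows of $\A_{3,1}$ land on the two flows of $\A_{2,0}\sqcup\A_{2,1}$ with fibers of sizes $2$ and $4$, not $3$ and $3$. Also, Morales--Shi's relation~(19) in this specialization is not a relation between $n$ and $n-1$ with Narayana coefficients.

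The missing idea is to keep $m$ fixed and compare $k-1$ with $k$. At an inactive vertex of $F\in\A_{m,k-1}$ the conservation law reads $b_i+\sum_{j>i}f_{ij}=\sum_{h<i}(f_{hi}+1)$. Sum it over all such $F$ and $i$, and read both sides as labelled units of flow.
\begin{itemize}
\item Units on ordinary exits correspond to activating a vertex, giving $k\,a_{m,k}$.
\item Units on edges between two inactive vertices cancel except those with label~$0$, giving $\binom{m-k+1}{2}a_{m,k-1}$.
\item Outgoing units on an edge from an inactive to an active vertex are handled by the pivot $(f_{ij},r_i,r_j;s)=(p,0,t;s)\mapsto(p-s,s,t-s)$ or $(p-t,t,0;s-t)$, according as $s<t$ or $s\ge t$. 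These go either to incoming units on an edge from an active to an inactive vertex, which cancel, or to flows in $\A_{m,k}$ with a marked active pair, giving $\binom{k}{2}a_{m,k}$.
\end{itemize}
The result is $\binom{k+1}{2}a_{m,k}=\binom{m-k+1}{2}a_{m,k-1}$. Iterating from $a_{m,0}=u_m$ gives $a_{m,k}=\frac{u_m}{m}\binom{m}{k}\binom{m}{k+1}$, and Vandermonde gives $\sum_k a_{m,k}=C_m u_m$.

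The Narayana numbers thus arise by telescoping simple binomial ratios at fixed $m$, not as fiber sizes of a contraction. Without this or some other explicit count, your argument does not reach the Catalan factor.
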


A related formulation appears in Postnikov's 2002 list of open problems
\cite[Problem~1(C)--(D)]{postnikov}.
Consider commuting variables $x_{ij}$ indexed by $i<j$ and the reduction rule
\begin{equation}\label{eq:rule}
 x_{ij}x_{jk}\longmapsto x_{ij}x_{ik}+x_{ik}x_{jk}
 \qquad(i<j<k).
\end{equation}
Starting from a monomial $M$, apply this rule to each resulting term until no
further reduction is possible. A complete reduction tree records this
process: its root is $M$, each reduction gives two children, and each leaf
is a terminal monomial, meaning that no reduction applies to it.
Leaves are counted with multiplicity: two occurrences of the same
terminal monomial count separately. The resulting
polynomial can depend on the reductions chosen, but its number of terms does
not. We denote this number by $N(M)$; Lemma~\ref{lem:reduction} gives a direct
proof that it is well defined.

Postnikov asked for a combinatorial proof of the following formula.
\begin{theorem}\label{thm:main}
For every $m\geq1$,
\begin{equation}\label{eq:target}
 N\!\left(\prod_{1\leq i<j\leq m}x_{ij}\right)
 =\prod_{j=1}^{m-1}C_j.
\end{equation}
\end{theorem}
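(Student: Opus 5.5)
We reduce Theorem~\ref{thm:main} to Theorem~\ref{thm:volume}, or more precisely to the flow count underlying it, via a flow interpretation of reduction trees.

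The key intermediate statement is Lemma~\ref{lem:reduction}, referred to above. We use it in the form: for a monomial $M=\prod x_{ij}^{a_{ij}}$, regarded as a multigraph $G_M$ on vertices $1,\dots,m$ with $a_{ij}$ copies of the edge $(i,j)$, the number of leaves $N(M)$ equals the number of nonnegative integer flows on $G_M$ augmented by a source/sink structure. The precise version we aim for is this. Adjoin a source $0$ with an edge $(0,i)$ for every vertex and a sink $m+1$. Then $N(M)$ is the normalized volume of the flow polytope $\F_{\widehat G_M}$, and by Postnikov--Stanley/Lidskii-type reasoning this equals a count of integer flows on $G_M$ with net flow prescribed by indegrees.

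The plan is to prove this directly by induction along the reduction. For one reduction step $x_{ij}x_{jk}\mapsto x_{ij}x_{ik}+x_{ik}x_{jk}$, we exhibit a bijection between the relevant flows on $G$ and the disjoint union of flows on the two child graphs. Split according to whether the flow routed along the path $i\to j\to k$ exceeds that entering $j$ through $(i,j)$, or the opposite. This is the standard subdivision of the flow polytope into two pieces of the same dimension. At a terminal monomial (no pair $x_{ij}x_{jk}$), the graph is such that each vertex is only a tail or only a head of the remaining edges. For such graphs the flow count is $1$, so leaves count correctly. This simultaneously shows that $N(M)$ is independent of the choices made.

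With this in hand, applying it to $M=\prod_{1\le i<j\le m}x_{ij}$ gives the complete graph $K_m$. The resulting count is the number of integer flows on $K_m$ with net flow at vertex $i$ equal to $\indeg(i)$ (up to sign conventions), namely $-(i-1)$ with appropriate balancing at the sink. By the Lidskii formula this equals the normalized volume of $\F_{K_{m+2}}=\mathrm{CRY}_{m+1}$. Theorem~\ref{thm:volume} then gives $\prod_{j=1}^{m-1}C_j$.

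The main obstacle is that Theorem~\ref{thm:volume} is itself to be proved combinatorially. So we should really count these integer flows directly, following the abstract. Add an extra edge from each vertex to a common sink. Refine the count by the set $S$ of such edges carrying positive flow, giving counts $F_m(k)$ with $k=|S|$. Double count a family of flows, for instance by deleting the first vertex and redistributing its outflow, to obtain a Morales--Shi type recurrence expressing the $m$-level counts through the $(m-1)$-level ones with Narayana coefficients $N(m,k)=\frac1m\binom mk\binom m{k-1}$. Solving gives $F_m(k)=N(\cdot,k)\prod_{j<m-1}C_j$. Summing over $k$ uses $\sum_k N(n,k)=C_n$, and deleting the first vertex yields the extra factor $C_{m-1}$. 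Designing the double count so that the Narayana numbers appear cleanly is the step I expect to be hardest. I would first try to guess the refined counts from small cases, $m\le 4$, and then look for an involution or bijection matching both sides.
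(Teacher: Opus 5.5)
\paragraph{Route via the volume theorem.} Your first route only moves the problem elsewhere. In this paper Theorem~\ref{thm:volume} is \emph{derived from} Theorem~\ref{thm:main}, so citing it is circular. It becomes legitimate only if you import Zeilberger's constant-term proof, which is exactly the non-combinatorial input to be avoided, as you note yourself.

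Your reduction-tree step is essentially Lemma~\ref{lem:reduction}, with the right split criterion. Still, make the lattice map explicit, because the supply vector changes in one child. With values $p,q$ on $i\to j$ and $j\to k$:
\begin{itemize}
\item send $p\ge q$ to $(p-q,q)$ on $(i\to j,i\to k)$;
\item send $p<q$ to $(p,q-p-1)$ on $(i\to k,j\to k)$.
\end{itemize}
The $-1$ compensates for one unit of supply moving from $j$ to $k$ in the second child.

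\paragraph{The direct count.} This is where the genuine gap is. You leave the central step unproved, and the mechanism you suggest cannot supply it. Vertex~$1$ has supply $\indeg(1)=0$, so all its outgoing values are zero and there is nothing to redistribute. Deleting it is a trivial bijection and produces no Narayana coefficients.

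The missing idea is a recurrence at \emph{fixed} $m$ between consecutive $k$:
\[
\binom{k+1}{2}a_{m,k}=\binom{m-k+1}{2}a_{m,k-1}.
\]
The paper proves it by summing the conservation law $b_i+\sum_{j>i}f_{ij}=\sum_{h<i}(f_{hi}+1)$ over the inactive vertices of all flows in $\A_{m,k-1}$, then matching unit slots:
\begin{itemize}
\item moving an ordinary exit slot into the distinguished exit accounts for $k\,a_{m,k}$;
\item on edges between inactive vertices the positive labels cancel, and the label-$0$ incoming slots give $\binom{m-k+1}{2}a_{m,k-1}$;
\item on edges from an inactive $i$ to an active $j$, the pivot transferring $\min\{s,t\}$ units from $f_{ij}$ and $r_j$ to $r_i$ cancels the remaining incoming slots and leaves $\binom{k}{2}a_{m,k}$.
\end{itemize}
Iterating from $a_{m,0}=u_m$ gives $a_{m,k}=\frac{u_m}{m}\binom{m}{k}\binom{m}{k+1}$, hence $\sum_k a_{m,k}=C_m u_m$.

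\paragraph{Assembly.} The Catalan factor enters exactly once, from this sum; deletion contributes no factor of its own. Your tentative normalization $F_m(k)\propto\prod_{j<m-1}C_j$ cannot be right, since the $k=0$ term is $u_m=\prod_{j\le m-1}C_j$ itself.

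Deletion of vertex~$1$ is only the bijection $\A_{m+1,m}\simeq\bigsqcup_{k}\A_{m,k}$, obtained by subtracting one from each $r_i$. The induction closes because the same recurrence at level $m+1$ forces $a_{m+1,m}=a_{m+1,0}=u_{m+1}$. Without that recurrence you have no way to identify the all-active count with $u_{m+1}$.
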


Both formulas follow by counting the same set of integer flows.
We first identify reduction leaves with these flows. We then allow each
vertex a second exit and count flows according to the number of vertices at
which that exit is positive. The main step is a double count of marked units
of flow. It determines the refined counts. Summing them and deleting a
vertex then gives the Catalan factor. Section~\ref{sec:bijection}
makes the cancellations in the double count into an explicit bijection.

\section{Reduction leaves and integer flows}

Write $[m]=\{1,\ldots,m\}$. Represent a monomial by a directed multigraph
$G$ on $[m]$, with an edge $i\to j$ for each occurrence of $x_{ij}$.
All edges point from a smaller vertex to a larger one. Parallel edges are
regarded as distinct, and indegrees count their multiplicities.

Adjoin one sink and an edge from each vertex to that sink; we call these
edges exits. Let $\F(G)$ be the set of assignments
of nonnegative integers $f_e$ to the edges of $G$ and $b_i$ to the exits such
that
\begin{equation}\label{eq:graphflow}
 b_i+\sum_{e:i\to *}f_e
 =\indeg_G(i)+\sum_{e:*\to i}f_e
 \qquad(i\in[m]).
\end{equation}
Thus vertex $i$ supplies $\indeg_G(i)$ units of flow. These sets are finite:
summing~\eqref{eq:graphflow} over $\{1,\ldots,i\}$ bounds every flow
leaving that interval, including its exits.

The proof uses a bijection of Morales and Shi~\cite[Lemma~16]{morales-shi}
for the two graphs produced by a reduction. We give it in the notation
needed here.

\begin{lemma}\label{lem:reduction}
Every complete reduction tree rooted at $G$ has $|\F(G)|$ leaves.
\end{lemma}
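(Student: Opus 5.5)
We argue by induction along the reduction tree. Two facts suffice. First, if $G$ is terminal, meaning no pair of edges $i\to j$, $j\to k$ with $i<j<k$ exists, then $|\F(G)|=1$. Second, if a reduction at edges $i\to j$, $j\to k$ produces children $G_1$ (replace by $i\to j$, $i\to k$) and $G_2$ (replace by $i\to k$, $j\to k$), then
\[
|\F(G)|=|\F(G_1)|+|\F(G_2)|.
\]
Given these, induction on the depth of a complete reduction tree shows that its number of leaves equals $|\F(G)|$, for every choice of reductions. This also shows that $N(M)$ is well defined.

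\emph{Terminal case.} If $G$ has no such path, then every vertex either has no outgoing edges or has indegree zero. Process the vertices in increasing order. A vertex with indegree $0$ receives no flow except from edges into it, and it has none, so it supplies nothing and all its outgoing edges carry $0$. A vertex with no outgoing edges must send everything to its exit. Hence the flow is forced: every $f_e=0$ and $b_i=\indeg_G(i)$. So $|\F(G)|=1$.

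\emph{Reduction step.} Let $G_0$ be $G$ with the two edges $e_1=i\to j$ and $e_2=j\to k$ removed. The supplies change as follows. In $G$, vertex $j$ gets $+1$ from $e_1$ and $k$ gets $+1$ from $e_2$. In $G_1$, $j$ and $k$ each get $+1$. In $G_2$, $k$ gets $+2$. So the supply vectors of $G$ and $G_1$ agree, while $G_2$ moves one unit of supply from $j$ to $k$. We split $\F(G)$ according to the comparison of $f_{e_1}$ and $f_{e_2}$, following the idea of Morales--Shi Lemma~16.

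\begin{itemize}
\item If $f_{e_1}\ge f_{e_2}$: map to $G_1$ by setting $f'_{i\to j}=f_{e_1}-f_{e_2}$ and $f'_{i\to k}=f_{e_2}$. This rerouting of the common part $i\to j\to k$ to $i\to k$ preserves net flow at every vertex.
\item If $f_{e_1}<f_{e_2}$: map to $G_2$ by setting $f'_{i\to k}=f_{e_1}$ and $f'_{j\to k}=f_{e_2}-f_{e_1}-1$.
\end{itemize}

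For the second case, check vertex $j$. In $G$ its outflow along $e_2$ exceeded its inflow along $e_1$ by $f_{e_2}-f_{e_1}$, using one unit of supply from $e_1$. In $G_2$ that supply unit is gone, and the outflow along $j\to k$ drops by $f_{e_1}+1$ against the $f_{e_1}$ units lost on $e_1$, so $j$ balances. At $k$, the inflow drops by one while the supply rises by one, so $k$ balances. Vertex $i$ is unchanged.

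The inverses are explicit. From $G_1$, set $f_{e_2}=f'_{i\to k}$ and $f_{e_1}=f'_{i\to j}+f'_{i\to k}$. From $G_2$, set $f_{e_1}=f'_{i\to k}$ and $f_{e_2}=f'_{j\to k}+f'_{i\to k}+1$. All images are nonnegative, and the two inverse images land exactly in the regimes $f_{e_1}\ge f_{e_2}$ and $f_{e_1}<f_{e_2}$ respectively. So the map is a bijection $\F(G)\to\F(G_1)\sqcup\F(G_2)$.

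The main obstacle is this second case: the off-by-one bookkeeping of the supply unit that $G_2$ moves from $j$ to $k$. The check must also cover parallel edges. These cause no trouble, because we act only on the two chosen edge occurrences, and all other edges and exits are left untouched.
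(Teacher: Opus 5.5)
Your proof is correct and is essentially the paper's: the same Morales--Shi bijection with the split $f_{e_1}\ge f_{e_2}$ versus $f_{e_1}<f_{e_2}$ and the same explicit inverses, combined with the same observation that a terminal graph admits only the flow $f_e=0$, $b_i=\indeg_G(i)$. The one ingredient the paper supplies that you leave implicit is termination. Your induction on depth presumes the tree is finite, and your claim that $N(M)$ is well defined also needs a complete tree to exist; the paper secures both by noting that $\sum (j-i)$ over the edges strictly increases in each child and is bounded by $(m-1)|E(G)|$.
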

\begin{proof}
Select edges $i\to j$ and $j\to k$, where $i<j<k$.
The two children replace this pair by
$(i\to j,i\to k)$ and $(i\to k,j\to k)$, respectively; denote them by
$G_L$ and $G_R$. If the selected edges carry values $p,q$, define a map
\[
 \F(G)\longrightarrow\F(G_L)\sqcupdot\F(G_R)
\]
by the following table. The new values are listed in the order of the edge
pairs just specified.
\[
\begin{array}{c@{\qquad}c@{\qquad}c}
\toprule
\text{Condition}&\text{Image}&\text{Inverse from }(u,v)\\
\midrule
p\geq q&(p-q,q)\text{ in }\F(G_L)&(p,q)=(u+v,v)\\
p<q&(p,q-p-1)\text{ in }\F(G_R)&(p,q)=(u,u+v+1)\\
\bottomrule
\end{array}
\]
Leave all other edge values and all exits unchanged. The indegrees in $G_L$
are the same as in $G$. In $G_R$, the indegree of $j$ decreases by one and
that of $k$ increases by one. Substitution in~\eqref{eq:graphflow} shows that
both maps preserve the required equations. The displayed inverses give a
disjoint bijection.

Every reduction terminates. Indeed, the sum of the edge lengths $j-i$
strictly increases in either child and is bounded by $(m-1)|E(G)|$.
In a terminal graph no vertex has both an incoming and an outgoing edge.
Every edge therefore starts at a vertex of indegree zero, so its flow is
zero. The unique assignment is $f_e=0$ and $b_i=\indeg_G(i)$.
Iterating the bijection proves the lemma.
\end{proof}

Let $K_m$ be the complete directed graph on $[m]$, oriented by the vertex
order, and put $u_m=|\F(K_m)|$. The lemma gives
\begin{equation}\label{eq:um}
 u_m=N\!\left(\prod_{1\leq i<j\leq m}x_{ij}\right).
\end{equation}

\section{Flows with two exits}\label{sec:two-exits}

Alongside the existing exit at each vertex of $K_m$, add a second edge to
the same sink. Write $b_i$ for the flow on the ordinary exit and $r_i$ for
the flow on the added, distinguished exit.
For $0\leq k\leq m-1$, let $\A_{m,k}$ consist of the nonnegative
integer arrays $(f_{ij},b_i,r_i)$ satisfying
\begin{equation}\label{eq:refined}
 b_i+r_i+\sum_{j>i}f_{ij}
 =(i-1)+\sum_{h<i}f_{hi}
 \qquad(i\in[m]),
\end{equation}
with exactly $k$ positive $r_i$. For $F\in\A_{m,k}$, write
\[
 S(F)=\{i:r_i>0\},\qquad a_{m,k}=|\A_{m,k}|.
\]
We call the vertices in $S(F)$ active and the other vertices inactive.
Vertex~1 has only zero outgoing values, so it is always inactive.
In particular, $a_{m,0}=u_m$. We will first determine all the counts
$a_{m,k}$ from $u_m$, then relate their sum to $u_{m+1}$ by deleting a vertex.

In the notation of Morales and Shi~\cite[Theorem~6(i)]{morales-shi},
$a_{m,k}=\Psi_m(k,1,1,1)$: on forgetting the
distinguished exits, the net flow at vertex $i$ is $i-1-r_i$.
The following is their recurrence~(19) for this specialization.
Morales and Shi also suggested interpreting its coefficients by marked
edges~\cite[Section~7.2]{morales-shi}.

\begin{lemma}\label{lem:recurrence}
For $1\leq k\leq m-1$,
\begin{equation}\label{eq:recurrence}
 \binom{k+1}{2}a_{m,k}
 =\binom{m-k+1}{2}a_{m,k-1}.
\end{equation}
\end{lemma}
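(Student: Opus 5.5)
The plan is to prove \eqref{eq:recurrence} by a double count in which both sides enumerate the same set of decorated flows. First I would read the two binomial coefficients as choices of vertices. The number $\binom{k+1}{2}$ counts multisets $\{i,j\}$ with $i\le j$ drawn from the $k$ active vertices of some $F\in\A_{m,k}$. The number $\binom{m-k+1}{2}$ counts $2$-subsets $\{i<j\}$ of the $m-k+1$ inactive vertices of some $F'\in\A_{m,k-1}$. Vertex~$1$ is always one of these inactive vertices.

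So the left side counts pairs $(F,\{i\le j\})$ with $i,j\in S(F)$, and the right side counts pairs $(F',\{i<j\})$ with $i,j\notin S(F')$. I want an intermediate set $\mathcal M$ with two maps onto it, each of which is bijective or has fibres that cancel. The natural candidate for $\mathcal M$ is the set of arrays satisfying \eqref{eq:refined} together with one marked unit of flow. Going from $F$ with $i\le j$ active, I would remove one unit from $r_j$ and reroute it along the edge $i\to j$. When $i=j$, I would instead mark it on the ordinary exit $b_j$. This produces a flow in which $j$ may become inactive and a unit on an edge is distinguished.

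The inverse direction uses a decreasing-index convention. Starting from $F'$ with inactive $i<j$, the marked unit would be pushed out of $i$ along $i\to j$, and a unit of $r$ would be created at $j$. That makes $j$ active, while $i$ must then become active through a compensating change, such as converting one unit of $b_i$ into $r_i$. If $b_i=0$, vertex~$1$ plays the role of the extra element. The conservation equations \eqref{eq:refined} are then checked exactly as in the proof of Lemma~\ref{lem:reduction}: every move shifts one unit between two terms on the same side at the same vertex, or along a single edge.

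In practice I expect the naive maps not to be bijections. The proof will then be a signed or weighted double count. I would sum over all $F\in\A_{m,k}$ a statistic such as $\sum_{i}r_i$ or the number of ordered pairs of units. I would evaluate this sum once by grouping according to $S(F)$ and once by grouping according to which unit is decremented, so that the correction terms cancel between the two expressions. The main obstacle is arranging these cancellations so that the coefficients come out exactly $\binom{k+1}{2}$ and $\binom{m-k+1}{2}$ rather than some other quadratic in $k$. In particular, the case $i=j$ on the left and the always-inactive vertex~$1$ on the right must correspond cleanly. I would first test the proposed correspondence by hand on $m=3$, where $a_{3,0}=u_3=2$ and the recurrence predicts $a_{3,1}=3$ and $a_{3,2}=1$, before fixing the final rules.
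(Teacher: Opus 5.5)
Your reading of the coefficients matches the paper's: $\binom{k+1}{2}=k+\binom{k}{2}$ counts either one marked active vertex or a marked pair of active vertices, and $\binom{m-k+1}{2}$ counts pairs of inactive vertices. Beyond that, the proposal does not yet contain a proof. You name arranging the cancellations so that exactly these coefficients appear as the main obstacle, but that is the whole content of the lemma. The concrete moves you sketch also fail:
\begin{itemize}
\item Taking one unit off $r_j$ and putting it on $f_{ij}$ raises the outgoing side of \eqref{eq:refined} at $i$ with nothing to compensate. At $j$ it lowers the outgoing side while raising the incoming side. The result is not a flow.
\item Removing a single unit makes a vertex inactive only when its distinguished exit was $1$. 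So the image need not lie in $\A_{m,k-1}$.
\item Your inverse sketch makes both $i$ and $j$ active. That gives $k+1$ active vertices, not $k$.
\end{itemize}
The moves that work transfer the entire distinguished exit and record its size as the label of a marked unit. For a single marked vertex, merge all of $r_i$ into $b_i$ and mark the exit unit labelled by the former value of $r_i$. For an active pair $i<j$ with $r_i=u$, set $r_i=0$, add $u$ to both $f_{ij}$ and $r_j$, and mark the unit labelled $u$ on $i\to j$. The smaller vertex $i$ then becomes inactive, the outgoing total at $i$ is unchanged, and both sides at $j$ rise by $u$.

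The missing key idea is the identity to be summed. At an inactive vertex $i$ of $F\in\A_{m,k-1}$, \eqref{eq:refined} reads $b_i+\sum_{j>i}f_{ij}=\sum_{h<i}(f_{hi}+1)$. So the supply $i-1$ becomes one extra unit, labelled $0$, on each incoming edge $h\to i$. Sum this over all such $F$ and all inactive $i$, and compare the two sides:
\begin{itemize}
\item On an edge joining two inactive vertices, outgoing and incoming units with the same positive label cancel. The surviving label-$0$ units are one per inactive pair, giving $\binom{m-k+1}{2}a_{m,k-1}$.
\item Exit units give $k\,a_{m,k}$ by splitting off $r_i$.
\item Take an outgoing unit labelled $s$ on $i\to j$, with $j$ active and $r_j=t$. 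Move $\min\{s,t\}$ units from $f_{ij}$ and $r_j$ to $r_i$. If $s<t$, this gives a flow in $\A_{m,k}$ with the pair $\{i,j\}$ marked; these contribute $\binom{k}{2}a_{m,k}$ in total. If $s\geq t$, it gives an incoming unit on an edge from an active vertex into an inactive one. These cancel all such incoming units, label $0$ included.
\end{itemize}
Neither a statistic such as $\sum_i r_i$ nor using vertex~$1$ as a stand-in when $b_i=0$ leads to this. The extra elements on the right-hand side are the label-$0$ units, one for each earlier vertex. Also, your test values are wrong: with $a_{3,0}=2$, the recurrence gives $a_{3,1}=6$ and $a_{3,2}=2$, not $3$ and $1$, and direct enumeration confirms $6$ and $2$.
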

\begin{proof}
For $F\in\A_{m,k-1}$ and an inactive vertex $i$, equation~\eqref{eq:refined}
becomes
\begin{equation}\label{eq:slots}
 b_i+\sum_{j>i}f_{ij}=\sum_{h<i}(f_{hi}+1).
\end{equation}
At an inactive vertex $i$, label the units on the ordinary exit by
$1,\ldots,b_i$ and those on each outgoing edge $i\to j$ by
$1,\ldots,f_{ij}$. For each incoming edge $h\to i$, use labels
$0,\ldots,f_{hi}$; label~0 represents the extra~1 in~\eqref{eq:slots}.
We call a choice of an edge and one of its labels a slot. A slot always
belongs to a specified flow and inactive vertex.
Sum~\eqref{eq:slots} over all flows $F$ and all their inactive vertices.
We compare the resulting outgoing and incoming slots.

\smallskip\noindent\emph{Ordinary exits.}
An exit slot $s$, where $1\leq s\leq b_i$, determines a new flow by replacing
\[
 (b_i,r_i)=(b_i,0)\quad\text{with}\quad(b_i-s,s).
\]
Mark the newly active vertex $i$. Conversely, merge the distinguished exit
at a marked active vertex into its ordinary exit. Thus ordinary exit slots
are counted by $k\,a_{m,k}$.

\smallskip\noindent\emph{Edges with two inactive endpoints.}
For an edge $i\to j$ whose endpoints are inactive, cancel the outgoing and
incoming slots with the same positive label. The incoming slot with label
zero remains. Each flow in $\A_{m,k-1}$ has $m-k+1$ inactive vertices, so
these remaining slots are counted by
$\binom{m-k+1}{2}a_{m,k-1}$.

\smallskip\noindent\emph{Edges with one active endpoint.}
When $k=1$ there are no such edges in a flow with $k-1$ active vertices.
Suppose $k\geq2$. Fix $i<j$ and a set of $k-2$ active vertices outside
$\{i,j\}$. Consider
the following three families, with this fixed active set outside the pair:
\[
\begin{array}{c@{\qquad}c@{\qquad}c}
\toprule
\text{Family}&\text{Status of }(i,j)&\text{Slot or mark}\\
\midrule
\mathcal U&\text{inactive, active}&1\leq s\leq f_{ij}\\
\mathcal V&\text{active, inactive}&0\leq s'\leq f_{ij}\\
\mathcal W&\text{active, active}&\text{the pair }\{i,j\}\\
\bottomrule
\end{array}
\]
The first two families have $k-1$ active vertices, and the third has $k$.
We give a bijection $\mathcal U\simeq\mathcal V\sqcupdot\mathcal W$.

For an element of $\mathcal U$, put $p=f_{ij}$ and $t=r_j>0$.
Change only $(f_{ij},r_i,r_j)$ and the indicated slot:
\begin{equation}\label{eq:pivot}
 (p,0,t;s)\longmapsto
 \begin{cases}
  (p-s,s,t-s)\in\mathcal W,&s<t,\\
  (p-t,t,0;s-t)\in\mathcal V,&s\geq t.
 \end{cases}
\end{equation}
The values are nonnegative, and the new slot in the second case lies
between $0$ and $p-t$. Both cases preserve~\eqref{eq:refined}. Transferring
$\delta=\min\{s,t\}$ units from $f_{ij}$ and $r_j$ to $r_i$ leaves the total outgoing
flow at $i$ unchanged; at $j$, incoming and outgoing flow both decrease
by $\delta$.

For an element of $\mathcal W$ with values $(p',u,v)$, where $u,v>0$, the
inverse is
\[
 (p'+u,0,u+v;s=u).
\]
For an element of $\mathcal V$ with values $(p',u,0;s')$, where $u>0$, the
inverse is
\[
 (p'+u,0,u;s=s'+u).
\]
These satisfy, respectively, the strict and weak inequalities
in~\eqref{eq:pivot}.

After summing over $i<j$ and the active sets outside the pair, replace
the outgoing slots in $\mathcal U$ by their images in
$\mathcal V\sqcupdot\mathcal W$. The $\mathcal V$ terms cancel the incoming
slots. What remains is one contribution for each flow in $\A_{m,k}$
with a marked pair of active vertices, giving $\binom{k}{2}a_{m,k}$
on the outgoing side.
Edges with two active endpoints contribute no slots, since we summed only
at inactive vertices. We are left with
\[
 k\,a_{m,k}+\binom{k}{2}a_{m,k}
 =\binom{m-k+1}{2}a_{m,k-1},
\]
as required.
\end{proof}

\section{The Catalan product}

Iterating~\eqref{eq:recurrence} gives the Narayana refinement
\begin{equation}\label{eq:narayana}
 a_{m,k}
 =u_m\prod_{\ell=1}^{k}
   \frac{(m-\ell+1)(m-\ell)}{\ell(\ell+1)}
 =\frac{u_m}{m}\binom{m}{k}\binom{m}{k+1}
 \qquad(0\leq k\leq m-1).
\end{equation}
The factors multiplying $u_m$ in the last expression are the Narayana
numbers. This is the specialization recorded in
\cite[Corollary~44]{morales-shi}.
In particular, $a_{m,m-1}=u_m$. Summing and applying Vandermonde's identity,
we obtain
\begin{equation}\label{eq:rowsum}
 \sum_{k=0}^{m-1}a_{m,k}
 =\frac{u_m}{m}\binom{2m}{m-1}=C_m u_m.
\end{equation}
Here the binomial identity counts the choices of $m-1$ elements from two
disjoint $m$-element sets, according to the number chosen from the first.

The identity counted by the next bijection is the case $a=b=c=1$ of
\cite[(17)]{morales-shi}, together with their Corollary~7.
\begin{lemma}\label{lem:deletion}
There is a bijection
\begin{equation}\label{eq:deletion}
 \A_{m+1,m}\simeq\bigsqcup_{k=0}^{m-1}\A_{m,k}.
\end{equation}
\end{lemma}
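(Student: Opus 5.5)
The bijection comes from deleting vertex~$1$ of $K_{m+1}$ and then lowering every distinguished exit by one unit.

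\textbf{Vertex~$1$ carries nothing.} Take $F\in\A_{m+1,m}$. Since vertex~$1$ is always inactive and $\A_{m+1,m}$ has exactly $m$ positive $r_i$, every vertex $i\in\{2,\ldots,m+1\}$ is active, i.e.\ $r_i\ge1$. At vertex~$1$, equation~\eqref{eq:refined} reads $b_1+r_1+\sum_{j>1}f_{1j}=0$. Hence $b_1=r_1=0$ and $f_{1j}=0$ for all $j$.

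\textbf{Relabelling.} Delete vertex~$1$ and relabel $i\mapsto i'=i-1$, so the remaining vertices become $[m]$. Because all edges out of vertex~$1$ carry zero, the incoming sums at the other vertices are unchanged. The conservation law at $i'\in[m]$ becomes
\[
 b_{i'}+r_{i'}+\sum_{j'>i'}f_{i'j'}=i'+\sum_{h'<i'}f_{h'i'},
\]
with $r_{i'}\ge1$ for every $i'$. The supply is $i'$ rather than $i'-1$, one unit more than in~\eqref{eq:refined}.

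\textbf{The map.} Define the image of $F$ by keeping all $f$ and $b$ values and replacing each $r_{i'}$ by $r_{i'}-1\ge0$. This removes exactly the extra unit of supply at each vertex. The result is therefore a nonnegative integer array satisfying~\eqref{eq:refined} on $K_m$. It lies in $\A_{m,k}$, where $k$ is the number of $i'$ with $r_{i'}\ge2$ in $F$.

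Every $k$ with $0\le k\le m-1$ can occur, and no other. Indeed, new vertex~$1$ always has $r=0$ in the image: it is forced to be inactive by~\eqref{eq:refined}, consistent with the fact that in $F$ old vertex~$2$ has supply~$1$ and positive $r_2$, which forces $r_2=1$.

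\textbf{The inverse.} Given an array in $\A_{m,k}$ for some $0\le k\le m-1$:
\begin{itemize}
\item add $1$ to every $r_i$;
\item shift all labels up by one;
\item adjoin a new vertex~$1$ with all values zero.
\end{itemize}
Each $r_i$ becomes positive, so all $m$ vertices $2,\ldots,m+1$ are active and the result lies in $\A_{m+1,m}$. Conservation holds because supply and outflow both rise by one at each vertex. The two maps are clearly mutually inverse, which gives~\eqref{eq:deletion}.

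The main point requiring care is the bookkeeping of the active sets. One must check that the images exhaust the disjoint union over all $k$ from $0$ to $m-1$, and that the target sets are disjoint by definition. Both follow from the observation that old vertex~$2$ always maps to an inactive vertex, so $k\le m-1$, and from the fact that $k$ is determined by the flow.
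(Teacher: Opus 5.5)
Your proof is correct and is essentially the paper's: delete vertex~1, which carries only zero flow, relabel, and lower each distinguished exit by one, with the inverse inserting a zero vertex and raising each $r_i$ by one. Your aside that every $k$ with $0\le k\le m-1$ actually occurs is not needed, since surjectivity onto the disjoint union comes from the inverse map, but it does no harm.
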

\begin{proof}
In a flow in $\A_{m+1,m}$, precisely vertices $2,\ldots,m+1$ are active.
Delete vertex~1 and its zero flows, subtract one from each remaining $r_i$,
and relabel $i$ as $i-1$. The outgoing flow and the supply at each remaining
vertex both decrease by one, so~\eqref{eq:refined} holds after relabelling.
The distinguished exits may now be zero. Conversely, insert an initial
vertex with all outgoing values zero and add one to each other distinguished
exit. These operations are inverse.
\end{proof}

\begin{proof}[Proof of Theorem~\ref{thm:main}]
Equations~\eqref{eq:narayana}--\eqref{eq:deletion} give
\[
 u_{m+1}=a_{m+1,m}
 =\sum_{k=0}^{m-1}a_{m,k}=C_m u_m.
\]
Since $u_1=1$, induction and~\eqref{eq:um} prove the formula.
\end{proof}

\begin{proof}[Proof of Theorem~\ref{thm:volume}]
The volume formula of Postnikov--Stanley and Baldoni--Vergne,
in the form stated and proved in~\cite[Theorem~1 and Section~3]{morales-shi},
identifies $\vol(\mathrm{CRY}_{m+1})$ with the
number of nonnegative integer flows having net flow
\[
 \left(0,0,1,\ldots,m-1,-\binom{m}{2}\right).
\]
All edges out of vertex~0 carry zero flow. Deleting that vertex gives
exactly $\F(K_m)$, with vertex $m+1$ as the exit sink. Thus
Theorem~\ref{thm:main} also gives
\[
 \vol(\mathrm{CRY}_{m+1})=u_m=\prod_{j=1}^{m-1}C_j.\qedhere
\]
\end{proof}

\section{A bijection for the recurrence}\label{sec:bijection}

The proof of Lemma~\ref{lem:recurrence} can be made into a single bijection
between two sets of marked flows. Let $\infty$ denote the common sink, and
let $\binom{T}{2}$ denote the set of two-element subsets of $T$.
On the left-hand side below, a pair $\{i,\infty\}$ records one marked
active vertex, and a pair $\{i,j\}$ records two marked active vertices.

\begin{proposition}\label{prop:marked}
For $1\leq k\leq m-1$, there is an explicit bijection
\begin{equation}\label{eq:marked}
\begin{split}
 &\left\{(F,P):F\in\A_{m,k},\quad
        P\in\binom{S(F)\cup\{\infty\}}{2}\right\}\\
 &\hspace{25mm}\simeq
 \left\{(F,P):F\in\A_{m,k-1},\quad
        P\in\binom{[m]\setminus S(F)}{2}\right\}.
\end{split}
\end{equation}
\end{proposition}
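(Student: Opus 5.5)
We turn the double count in the proof of Lemma~\ref{lem:recurrence} into a single bijection. The cancellations used there become a sign-free matching, resolved by an involution-principle-style composition. The left side of \eqref{eq:marked} has $(k+\binom{k}{2})a_{m,k}=\binom{k+1}{2}a_{m,k}$ elements and the right side has $\binom{m-k+1}{2}a_{m,k-1}$, so only explicitness is at issue.

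\textbf{Step 1: an intermediate set $\Sigma$.} Let $\Sigma$ be the set of pairs $(F,\sigma)$ with $F\in\A_{m,k-1}$ and $\sigma$ a slot at an inactive vertex of $F$. By~\eqref{eq:slots}, the slots at each inactive vertex split into outgoing slots (exit labels $1,\ldots,b_i$ and edge labels $1,\ldots,f_{ij}$) and incoming slots (labels $0,\ldots,f_{hi}$). This gives a bijection $\phi_i$ between the outgoing and incoming slots at vertex $i$, for instance by ordering both lists lexicographically by (edge, label) and matching positions. Thus $\Sigma$ carries an explicit bijection $\phi$ from outgoing slots to incoming slots, taken vertex by vertex.

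\textbf{Step 2: classify slots.} We then classify slots as in the proof of Lemma~\ref{lem:recurrence}.
\begin{enumerate}
\item[(a)] Exit slots correspond to left-side pairs $(F',\{i,\infty\})$.
\item[(b)] An outgoing slot with positive label $s$ on an edge $i\to j$ whose endpoints are both inactive is identified with the incoming slot $s$ at $j$.
\item[(c)] An incoming slot with label $0$ on an edge $h\to i$ between inactive vertices is identified with the right-side pair $(F,\{h,i\})$.
\item[(d)] An outgoing slot on an edge $i\to j$ with $j$ active is sent by the pivot map~\eqref{eq:pivot} either to a pair $(F',\{i,j\})$ on the left side, or to an incoming slot at $j$ on an edge from an active vertex $i$ (family $\mathcal V$).
\end{enumerate}
Every incoming slot is then a type-(c) slot, a type-(b) partner, or a $\mathcal V$-slot. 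Every outgoing slot is either terminal on the left side or carries a partner incoming slot.

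\textbf{Step 3: define the bijection by chasing.} Start from a right-side element $(F,\{h,i\})$, which is a label-$0$ incoming slot at $i$. Apply $\phi^{-1}$ to obtain an outgoing slot at $i$.
\begin{itemize}
\item If that slot is an exit slot or pivots into $\mathcal W$, stop and output the corresponding left-side element.
\item Otherwise it is identified, via (b) or via the pivot into $\mathcal V$, with an incoming slot that is not of label $0$. Apply $\phi^{-1}$ to that slot and repeat.
\end{itemize}
This is a path in a finite graph whose vertices are the slots. Each non-terminal node has in-degree and out-degree one, label-$0$ incoming slots are sources, and terminal outgoing slots are sinks. Every maximal path therefore runs from a source to a sink, and the chase is a bijection. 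The inverse runs the same chase backwards, starting from exit slots and $\mathcal W$-pairs.

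\textbf{Main obstacle: termination.} I expect the main obstacle to be termination, meaning we must rule out cycles and show that the sources are exactly the label-$0$ slots. Counting alone forbids infinite paths without cycles but does not exclude closed cycles. Excluding them is harmless for the bijection, though, since cycles contain no sources or sinks and are simply never visited. It therefore suffices to check that (b), (d) and $\phi$ are injective and that the matched domains are exactly as described. I would verify this with the explicit inverses already given in the proof of Lemma~\ref{lem:recurrence}. A cleaner alternative, which I would try first, is to choose $\phi$ so that each step moves strictly to a larger vertex: route incoming slots at $j$ to outgoing slots at $j$, so that after (b) or (d) the path advances from $i$ to $j>i$. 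The chase then ends within $m$ steps, and the bijection is explicit.
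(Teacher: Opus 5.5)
Your proposal is correct and is essentially the paper's proof. It uses the same slots, the same rank matching within each inactive vertex, the same exit merge and pivot~\eqref{eq:pivot}, and the same chase from a label-$0$ slot between inactive vertices to an exit or $\mathcal W$ pair; your in-/out-degree $\leq 1$ path argument takes the place of the paper's termination argument. That termination is in fact automatic for any $\phi$, since each step (b) or (d) moves along an edge $i\to j$ to the larger vertex $j$, which is how the paper concludes. One small correction: a $\mathcal V$-slot can carry label $0$, so the continuing slots are the incoming slots that are not type-(c) sources, not the incoming slots of positive label.
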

\begin{proof}
At each inactive vertex, order the outgoing slots by destination, with the
ordinary exit last, and then by label. Order the incoming slots by source
and then by label. Equation~\eqref{eq:slots} gives lists of equal size;
pair slots of equal rank, meaning the same position in the two lists.
Form these lists using the values of the current flow at each step.

Start with a marked flow on the left of~\eqref{eq:marked}. If the marked
pair is $\{i,\infty\}$, merge $r_i$ into $b_i$ and select the ordinary
exit slot whose label is the former value of $r_i$. If the pair is
$\{i,j\}$ with $i<j$, apply the inverse of the $\mathcal W$ case
in~\eqref{eq:pivot}. In either case we obtain an outgoing slot at an
inactive vertex $i$ in a flow with $k-1$ active vertices.

Match this slot by rank to an incoming slot on an edge $h\to i$.
There are three possibilities:
\begin{enumerate}
\item If $h$ is inactive and the label is zero, stop and mark $\{h,i\}$.
\item If $h$ is inactive and the label is positive, take the outgoing
slot with the same label on $h\to i$.
\item If $h$ is active, apply the inverse of the $\mathcal V$ case
in~\eqref{eq:pivot}, for the pair $h<i$.
\end{enumerate}
The last two cases produce an outgoing slot at the inactive vertex $h$.
Repeat the rank matching there. Each continuation decreases the current
vertex. Vertex~1 has no outgoing slots, so the procedure must stop; it uses
at most $m-1$ rank matches.

To recover the original marked flow, start with the incoming slot labelled~0 on
$i\to j$, where $\{i,j\}$ is the marked inactive pair and $i<j$.
At each step, reverse rank matching at the current vertex $v$, initially
$j$. If it gives an ordinary exit slot, split that exit using its selected
label and mark
$\{v,\infty\}$. If it gives an internal edge
to an inactive vertex, pass to the incoming slot with the same positive
label. If the other endpoint is active, apply~\eqref{eq:pivot}: stop and
mark the active pair in the $\mathcal W$ case, or continue from the
incoming slot supplied by the $\mathcal V$ case.
Every continuation increases the current vertex, so this procedure also
terminates. All its steps reverse the forward construction.
\end{proof}

For each flow on the two sides, the numbers of available marks are
$\binom{k+1}{2}$ and $\binom{m-k+1}{2}$, respectively.
Thus Proposition~\ref{prop:marked} also proves~\eqref{eq:recurrence} directly.

\section*{Acknowledgments}
The author used OpenAI Codex to assist with the preparation of this manuscript.

\end{document}